\newtheorem{theorem}{Theorem}[section]
\newtheorem{lemma}[theorem]{Lemma}
\newtheorem{cor}[theorem]{Corollary}
\newtheorem{prop}[theorem]{Proposition}
\theoremstyle{definition}
\newtheorem{definition}[theorem]{Definition}
\theoremstyle{remark}
\newtheorem{remark}[theorem]{Remark}
\numberwithin{equation}{section}
\newcommand\nutwid{\overset {\text{\lower 3pt\hbox{$\sim$}}}\nu}
\newcommand\mylabel[1]{\label{#1}}
\newcommand{\beqs}{\begin{equation*}}
\newcommand{\eeqs}{\end{equation*}}
\newcommand{\beq}{\begin{equation}}
\newcommand{\eeq}{\end{equation}}
\renewcommand{\MR}[1]{\href{http://www.ams.org/mathscinet-getitem?mr={#1}}{MR{#1}}}
\begin{document}

\title[A generalization of the Mordell integral]{A generalization of
the Mordell integral}

\author{Dandan Chen}
\address{Department of Mathematics, Shanghai University, Shanghai, People's Republic of China}
\address{Newtouch Center for Mathematics of Shanghai University, Shanghai, China}
\email{mathcdd@shu.edu.cn}
\author{Rong Chen}
\address{Department of Mathematics, Shanghai Normal University, Shanghai, People's Republic of China}
\email{rongchen20@tongji.edu.cn}
\author{Sander  Zwegers }
\address{Department of Mathematics and Computer Science, University of Cologne, Weyertal 86–90, 50931, Cologne, Germany}
\email{szwegers@uni-koeln.de}

\subjclass[2010]{11M06, 11F11, 33E50}

\keywords{Mordell integral, Modular form, Dirichlet character}

\begin{abstract}
We find a generalization of the Mordell integral and we also establish a  set of properties for a generalization of the Mordell integral similar to those in the third author's PhD thesis.
\end{abstract}

\maketitle
\section{Introduction}
Mordell initiated the study of the integral $\int_{-\infty}^{\infty}\frac{e^{ax^2+bx}}{e^{cx}+1}dx$, $\Re(x)<0$, in his two influential papers \cite{Mordell-QJM-20, Mordell-Acta-33}. Prior to his work, special cases of this integral had been studied, for example, by Riemann in his Nachlass \cite{Siegel-32} to derive the approximate functional equation for the Riemann zeta function, by Kronecker \cite{Kronecker-JRAM-89-1, Kronecker-JRAM-89-2}  to derive the reciprocity for Gauss sums. Mordell \cite{Mordell-Acta-33} showed that the above integral can be reduced to two standard forms, namely,
\begin{align}
&\varphi(z;\tau):=\tau \int_{-\infty}^{\infty}\frac{e^{\pi i\tau x^2-2\pi zx}}{e^{2\pi x}-1}dx,\label{defn-varphi}\\
&\sigma(z;\tau):=\tau \int_{-\infty}^{\infty}\frac{e^{\pi i\tau x^2-2\pi zx}}{e^{2\pi\tau x}-1}dx\nonumber,
\end{align}
 where the path of integration may be taken as either the real axis of $\tau$ indented by the lower half a small circle described about the origin as centre, and was the first to study the properties of these integrals with respect to modular transformations. Bellmann \cite[p. 52]{Bellman} coined the terminology 'Mordell integrals' for these types of integrals.

Mordell integrals play a very important role in the groundbreaking Ph.D. thesis of the third author \cite{Zwegers-thesis} which sheds a clear light on Ramanujan's mock theta functions. The definition of a Mordell integral $h(z;\tau)$, $\Im(\tau)>0$, employed by the third author \cite[p. 6]{Zwegers-thesis}, and now standard in the contemporary literature is,
\begin{align*}
h(z;\tau):=\int_{-\infty}^{\infty}\frac{e^{\pi i\tau x^2-2\pi zx}}{\cosh {\pi x}}dx.
\end{align*}
As remarked in \cite[p. 5]{Zwegers-thesis}, $h(z;\tau)$ is essentially the function $\varphi(z;\tau)$ defined in \eqref{defn-varphi}, i.e.,
\begin{align*}
h(z;\tau)=\frac{-2i}{\tau}e^{-(\pi i\tau/4+\pi iz)}\varphi(z+\frac{\tau-1}{2};\tau).
\end{align*}
The properties of the function $h(z;\tau)$ can be found in \cite[p. 6]{Zwegers-thesis}, i.e.

\begin{prop}The function $h$ has the following properties:
\begin{align*}
&\text{(1)}\quad h(z)+h(z+1)=\frac{2}{\sqrt{-i\tau}}e^{\pi i(z+1/2)^2/\tau},\\
&\text{(2)}\quad h(z)+e^{-2\pi iz-\pi i\tau}h(z+\tau)=2e^{-\pi iz-\pi i\tau/4},\\
&\text{(3)}\quad z\rightarrow h(z;\tau) \quad \text{is the unique holomorphic function satisfying (1) and (2)},\\
&\text{(4)}\quad h \quad \text{is an even function of z},\\
&\text{(5)}\quad h\left(\frac z\tau;-\frac1\tau\right)=\sqrt{-i\tau}e^{-\pi iz^2/\tau}h(z;\tau),\\
&\text{(6)}\quad h(z;\tau)=e^{\pi i/4}h(z;\tau+1)+e^{-\pi i/4}\frac{e^{\pi iz^2/{(\tau+1)}}}{\sqrt{\tau+1}}h\left(\frac{z}{\tau+1};\frac{\tau}{\tau+1}\right).
\end{align*}
\end{prop}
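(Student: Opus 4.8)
The plan is to prove the six properties in the order $(1)$, $(2)$, $(4)$, $(3)$, $(5)$, $(6)$: properties $(1)$, $(2)$, $(4)$ by direct manipulation of the defining integral; property $(3)$ by a Liouville-type uniqueness argument (combined with the fact, established in $(1)$ and $(2)$, that $h(\cdot;\tau)$ does satisfy these two functional equations); and properties $(5)$, $(6)$ by reducing them to $(3)$. At the outset one records two standing facts: since $\Im(\tau)>0$ the integrand $e^{\pi i\tau x^{2}-2\pi zx}/\cosh(\pi x)$ decays rapidly on horizontal lines, so $h(\cdot;\tau)$ is entire in $z$ and horizontal contour shifts are permissible; and $\int_{-\infty}^{\infty}e^{\pi i\tau x^{2}}\,dx=(-i\tau)^{-1/2}$.

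For $(1)$, add the two integrals: using $1+e^{-2\pi x}=2e^{-\pi x}\cosh(\pi x)$ the denominator cancels and one is left with $2\int_{-\infty}^{\infty}e^{\pi i\tau x^{2}-2\pi(z+1/2)x}\,dx$; completing the square and applying the Gaussian evaluation gives the claimed right-hand side. For $(2)$, shift the contour from $\mathbb{R}$ to $\mathbb{R}+i$. The identities $\pi i\tau(x+i)^{2}=\pi i\tau x^{2}-2\pi\tau x-\pi i\tau$ and $\cosh(\pi(x+i))=-\cosh(\pi x)$ turn the integral over $\mathbb{R}+i$ into $-e^{-\pi i\tau-2\pi iz}h(z+\tau)$, while the residue theorem—whose vertical contributions vanish by the Gaussian decay and whose only relevant pole is the simple one at $x=i/2$, with residue $e^{-\pi i\tau/4-\pi iz}/(\pi i)$—produces the term $2e^{-\pi iz-\pi i\tau/4}$; together these give $(2)$. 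Property $(4)$ is the change of variables $x\mapsto-x$ together with the evenness of $\cosh$.

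For $(3)$, suppose $h_{1}$ and $h_{2}$ are entire in $z$ and both satisfy $(1)$ and $(2)$; then $d:=h_{1}-h_{2}$ satisfies $d(z+1)=-d(z)$ and $d(z+\tau)=-e^{2\pi iz+\pi i\tau}d(z)$. Let $\theta(z;\tau)$ denote the odd Jacobi theta function, which is entire in $z$ with simple zeros exactly on $\mathbb{Z}+\mathbb{Z}\tau$ and satisfies $\theta(z+1;\tau)=-\theta(z;\tau)$ and $\theta(z+\tau;\tau)=-e^{-\pi i\tau-2\pi iz}\theta(z;\tau)$. Then $d(z)\,\theta(z;\tau)$ is entire and invariant under $z\mapsto z+1$ and $z\mapsto z+\tau$, hence constant by Liouville's theorem; since $\theta(\cdot;\tau)$ vanishes somewhere, that constant is $0$, so $d\equiv0$. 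Therefore $h(\cdot;\tau)$—entire and, by $(1)$ and $(2)$, a solution of both functional equations—is the unique such function.

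For $(5)$ and $(6)$ the method is identical: exhibit the asserted right-hand side as an entire function of $z$, check that it satisfies the functional equations $(1)$ and $(2)$ for the parameter $\tau$, and invoke the uniqueness $(3)$. For $(5)$, put $H(z):=(-i\tau)^{-1/2}e^{\pi iz^{2}/\tau}h(z/\tau;-1/\tau)$; applying $(1)$ and $(2)$ to $h(\cdot;-1/\tau)$ (legitimate since $-1/\tau$ again lies in the upper half-plane) and simplifying—each exponent that arises is a perfect square after expansion—shows that $H$ obeys $(1)$ and $(2)$ for $\tau$, so $H(z)=h(z;\tau)$, which rearranges to $(5)$. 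For $(6)$, one works with $(1)$ and $(2)$ at the two parameters $\tau+1$ and $\tau/(\tau+1)$ (both in the upper half-plane), using that translating the argument of $h(\cdot;\tau/(\tau+1))$ by $1$ is governed by $(1)$, by $\tau/(\tau+1)$ by $(2)$, and that $1/(\tau+1)=1-\tau/(\tau+1)$, to verify that the right-hand side of $(6)$ satisfies $(1)$ and $(2)$ at parameter $\tau$. I expect the bookkeeping for $(6)$—tracking all the Gaussian factors and, crucially, the branches of the square roots (which is precisely what the constants $e^{\pm\pi i/4}$ encode) until everything telescopes onto the clean right-hand sides of $(1)$ and $(2)$—to be the main obstacle; the contour estimate in $(2)$ and the theta-function input in $(3)$ are routine.
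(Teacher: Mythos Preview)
The paper does not actually prove this proposition: it is quoted verbatim from Zwegers' thesis as background, and the paper's own work consists of the generalizations in Theorems~\ref{Thm-hN-EO}--\ref{Thm-hN-1tau}. Your sketch is correct and is essentially the argument given in that thesis. Where the paper does overlap---items (1), (2), (4), which correspond to Theorems~\ref{Thm-hN-EO} and~\ref{Thm-hN-1-2}---your methods (denominator cancellation plus Gaussian evaluation for (1), contour shift and residues for (2), $x\mapsto -x$ for (4)) coincide with the paper's. For (5), however, the paper (in the generalized form of Theorem~\ref{Thm-hN-1tau}) takes a genuinely different route: rather than invoking the uniqueness statement (3), it writes $h_N(iz;\tau,\chi)=\mathcal{F}(f_\tau\Phi_N)(z)$, converts this to the convolution $\mathcal{F}f_\tau*\mathcal{F}\Phi_N$, and uses the explicit Fourier transforms of $f_\tau(x)=e^{\pi i\tau x^2}$ and of $\Phi_N$ (the latter being the content of Section~\ref{sec-pre}) to obtain the modular transformation directly. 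Your uniqueness approach buys (5) and (6) with no further analytic input once (3) is in hand; the paper's convolution approach bypasses (3) entirely but requires the Fourier-transform identity~\eqref{eq-phi-trans} as a separate ingredient. The paper does not treat (3) or (6) at all.
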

We refer the reader to a more recent article and references therein for further applications of the Mordell integrals \cite{Choi,Dixit-17}.

Let
\begin{align*}
h_N(z;\tau, \chi):=\int_{-\infty}^{\infty}\Phi_N(x;\chi)e^{\pi i\tau x^2-2\pi zx}dx,
\end{align*}
where $\Phi_N(x;\chi)$ is defined in \eqref{defn-Phi}, and $\tau\in \mathbb{H}, z\in\mathbb{C}$. For example, when $N=3$, we have
\begin{align*}
h_3(z;\tau,\chi):=\int_{-\infty}^{\infty}\frac{e^{\pi i\tau x^2-2\pi zx}}{1+2\cosh {\frac{2\pi x}{\sqrt 3}}}dx
\end{align*}
and for $N=4$, we have
\begin{align*}
h_4(z;\tau,\chi)=\frac 12 h(z;\tau).
\end{align*}
From this perspective, we consider $h_N(z;\tau,\chi)$ a natural generalization of the Mordell integral. The main goal of this paper is to establish a similar set of properties for a generalization of the  Mordell integral.

We first note
\begin{theorem}\label{Thm-hN-EO}
 $h_N(z;\tau,\chi)$ is an entire function of $z$; moreover, if $\chi(-1)=-1$, $h_N(z;\tau,\chi)$
is an even function of $z$ and if $\chi(-1)=1$, $h_N(z;\tau,\chi)$ is an odd function of $z$.
\end{theorem}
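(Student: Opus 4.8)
The plan is to read the analytic input off the definition \eqref{defn-Phi} of the kernel $\Phi_N(x;\chi)$ and then to interchange the $x$-integration with the relevant limiting processes.

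I would first record the properties of $\Phi_N(\cdot\,;\chi)$ that are needed. In its closed form (see \eqref{defn-Phi}) $\Phi_N(x;\chi)$ is a quotient of finite exponential sums, with numerator $\sum_{a=1}^{N-1}\chi(a)e^{(N-2a)\pi x/\sqrt N}$ and denominator $e^{\pi\sqrt N x}-e^{-\pi\sqrt N x}$. On the real line the denominator vanishes only at $x=0$, where the numerator also vanishes because $\sum_{a=1}^{N-1}\chi(a)=0$; hence the apparent singularity at the origin is removable and $\Phi_N(\cdot\,;\chi)$ is real-analytic, in particular continuous and bounded, on all of $\mathbb{R}$. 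Comparing the dominant exponentials in numerator and denominator as $x\to\pm\infty$ shows $\Phi_N(x;\chi)=O(e^{-c|x|})$ for some $c>0$, so $\Phi_N(\cdot\,;\chi)$ decays exponentially.

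To see that $h_N(\cdot\,;\tau,\chi)$ is entire, fix $\tau\in\mathbb{H}$ and put $\sigma=\IM(\tau)>0$. For $|z|\le R$ the integrand is bounded in modulus by $M\,e^{-\pi\sigma x^2+2\pi R|x|}$, where $M=\sup_{x\in\mathbb{R}}|\Phi_N(x;\chi)|$, and this is an $x$-integrable majorant independent of $z$; together with the previous paragraph this gives absolute and locally uniform convergence of the defining integral for all $z\in\mathbb{C}$. Since $z\mapsto\Phi_N(x;\chi)e^{\pi i\tau x^2-2\pi zx}$ is entire for each fixed $x$, Morera's theorem together with Fubini's theorem — the integral over $\partial\Delta\times\mathbb{R}$ converging absolutely for every triangle $\Delta$ — shows $h_N(\cdot\,;\tau,\chi)$ is holomorphic on $|z|<R$, hence entire as $R$ is arbitrary. (One may equally differentiate under the integral sign, the majorant above justifying the interchange.)

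The parity will follow from the reflection identity $\Phi_N(-x;\chi)=-\chi(-1)\Phi_N(x;\chi)$. Indeed the denominator $e^{\pi\sqrt N x}-e^{-\pi\sqrt N x}$ is odd in $x$, while the substitution $a\mapsto N-a$ in the numerator, using $\chi(N-a)=\chi(-1)\chi(a)$ and $N-2(N-a)=-(N-2a)$, multiplies $\sum_{a=1}^{N-1}\chi(a)e^{(N-2a)\pi x/\sqrt N}$ by $\chi(-1)$ under $x\mapsto -x$; dividing yields the identity. Applying the change of variables $x\mapsto -x$ to $h_N(-z;\tau,\chi)=\int_{-\infty}^{\infty}\Phi_N(x;\chi)e^{\pi i\tau x^2+2\pi zx}\,dx$ and using that $e^{\pi i\tau x^2}$ is even gives $h_N(-z;\tau,\chi)=\int_{-\infty}^{\infty}\Phi_N(-x;\chi)e^{\pi i\tau x^2-2\pi zx}\,dx=-\chi(-1)h_N(z;\tau,\chi)$, so $h_N$ is even in $z$ when $\chi(-1)=-1$ and odd when $\chi(-1)=1$. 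I do not anticipate a real obstacle: the only delicate points are the removable singularity of $\Phi_N$ at $x=0$ — precisely where $\sum_{a=1}^{N-1}\chi(a)=0$, i.e.\ the non-principality of $\chi$, is used — and the production of a $z$-uniform integrable majorant; the remainder is bookkeeping with $x\mapsto -x$ and $\chi(N-a)=\chi(-1)\chi(a)$.
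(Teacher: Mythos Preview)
Your proof is correct and follows essentially the same route as the paper: the parity comes from the reflection identity $\Phi_N(-x;\chi)=-\chi(-1)\Phi_N(x;\chi)$ (which the paper records as Lemma~\ref{lem-phiN} in the form $\phi_N(t^{-1})=\pm\phi_N(t)$) combined with the substitution $x\mapsto -x$. You additionally supply the integrable-majorant / Morera argument for entireness, which the paper's proof omits entirely (relying implicitly on the earlier remark that $\Phi_N\in\mathcal{S}$); this makes your write-up more complete but not methodologically different.
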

We now investigate the transformations of $h_N(z;\tau,\chi): z\rightarrow z+\sqrt N$ and $z\rightarrow z+\sqrt N\tau$.

\begin{theorem}\label{Thm-hN-1-2}we have
\begin{align}
&h_N(z;\tau,\chi)-h_N(z+\sqrt N;\tau,\chi)
=\frac{\chi(-1)}{\sqrt{-i\tau}}\sum_{k=1}^{N-1}\chi(k)e^{i\pi(z+\frac{k}{\sqrt N})^2/\tau};\mylabel{h-z+N}\\
&h_N(z;\tau,\chi)-e^{-i\pi N\tau-2i\pi\sqrt N z}h_N(z+\sqrt N\tau;\tau,\chi)
=\frac{g(\chi)}{i\sqrt N}\sum_{n=1}^{N-1}\overline{\chi(n)}e^{-i\pi n^2\tau/N-2i\pi nz/\sqrt N}.\mylabel{h-z+Ntau}
\end{align}
where $g(\chi)$ is defined in \eqref{defn-gchi}.
\end{theorem}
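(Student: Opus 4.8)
The plan is to read both identities off the integral representation
$h_N(z;\tau,\chi)=\int_{-\infty}^{\infty}\Phi_N(x;\chi)\,e^{\pi i\tau x^{2}-2\pi zx}\,dx$,
using throughout the Gaussian evaluation
$\int_{-\infty}^{\infty}e^{\pi i\tau x^{2}-2\pi wx}\,dx=\frac{1}{\sqrt{-i\tau}}\,e^{\pi iw^{2}/\tau}$,
valid for every $w\in\mathbb{C}$ since $\Im\tau>0$, and exploiting two different features of $\Phi_N$ that are visible from \eqref{defn-Phi}: (i) $\Phi_N(x;\chi)$ is a ratio whose denominator is the geometric factor $1-e^{-2\pi\sqrt N x}$ and whose numerator is a short exponential sum weighted by $\chi$; (ii) $\Phi_N(\,\cdot\,;\chi)$ is periodic with period $i\sqrt N$ and is meromorphic with only simple poles, located at the points $\tfrac{ij}{\sqrt N}$, $j\in\mathbb{Z}$, whose residues are Gauss sums. (Entireness of $z\mapsto h_N(z;\tau,\chi)$ from Theorem~\ref{Thm-hN-EO} guarantees these manipulations are legitimate.)

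For \eqref{h-z+N} I would write
$h_N(z;\tau,\chi)-h_N(z+\sqrt N;\tau,\chi)=\int_{-\infty}^{\infty}\Phi_N(x;\chi)\bigl(1-e^{-2\pi\sqrt N x}\bigr)e^{\pi i\tau x^{2}-2\pi zx}\,dx$,
and observe that multiplying $\Phi_N(x;\chi)$ by $1-e^{-2\pi\sqrt N x}$ cancels its denominator, leaving a finite linear combination of the exponentials $e^{-2\pi kx/\sqrt N}$, $1\le k\le N-1$, with coefficients read off from \eqref{defn-Phi}; this simultaneously removes the only potential singularity, at $x=0$, since $\sum_{k=1}^{N-1}\chi(k)=0$ for a nontrivial character. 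Interchanging this finite sum with the integral and applying the Gaussian evaluation with $w=z+k/\sqrt N$ turns the $k$-th term into $\frac{1}{\sqrt{-i\tau}}\,e^{\pi i(z+k/\sqrt N)^{2}/\tau}$; the elementary symmetry $\chi(N-k)=\chi(-1)\chi(k)$ together with the precise normalization in \eqref{defn-Phi} then puts the answer in the form stated in \eqref{h-z+N}.

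For \eqref{h-z+Ntau} I would start from $h_N(z+\sqrt N\tau;\tau,\chi)$ and complete the square,
$\pi i\tau x^{2}-2\pi(z+\sqrt N\tau)x=\pi i\tau(x+i\sqrt N)^{2}-2\pi z(x+i\sqrt N)+\pi iN\tau+2\pi i\sqrt N z$,
so that after the substitution $y=x+i\sqrt N$ one gets
$h_N(z+\sqrt N\tau;\tau,\chi)=e^{\pi iN\tau+2\pi i\sqrt N z}\int_{\mathbb{R}+i\sqrt N}\Phi_N(y-i\sqrt N;\chi)\,e^{\pi i\tau y^{2}-2\pi zy}\,dy$.
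By the $i\sqrt N$-periodicity of $\Phi_N$ the integrand equals $\Phi_N(y;\chi)\,e^{\pi i\tau y^{2}-2\pi zy}$, exactly the integrand defining $h_N(z;\tau,\chi)$ over $\mathbb{R}$. I would then deform the contour from $\mathbb{R}+i\sqrt N$ down to $\mathbb{R}$ along a tall rectangle: the two vertical sides drop out in the limit because $|e^{\pi i\tau y^{2}}|$ decays like $e^{-\pi(\Im\tau)(\Re y)^{2}}$ while $\Phi_N$ is bounded on the strip; the poles on the two horizontal edges, at $y=0$ and $y=i\sqrt N$, are removable (again $\sum_k\chi(k)=0$); and the only poles crossed are the simple poles of $\Phi_N$ at $y=ij/\sqrt N$, $1\le j\le N-1$, where (with the normalization of \eqref{defn-Phi}) the residue of $\Phi_N$ is $\frac{1}{2\pi\sqrt N}\sum_{k=1}^{N-1}\chi(k)e^{-2\pi ikj/N}$ and the accompanying exponential factor is $e^{\pi i\tau(ij/\sqrt N)^{2}-2\pi z(ij/\sqrt N)}=e^{-\pi ij^{2}\tau/N-2\pi ijz/\sqrt N}$. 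Picking up $2\pi i$ times the sum of these residues and multiplying through by $e^{-\pi iN\tau-2\pi i\sqrt N z}$ yields \eqref{h-z+Ntau}.

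The step that needs the most care is making the residue terms in \eqref{h-z+Ntau} come out exactly as written: one must evaluate the Gauss sum $\sum_{k=1}^{N-1}\chi(k)e^{-2\pi ikj/N}$ via \eqref{defn-gchi}, which for primitive $\chi$ equals $\chi(-1)\overline{\chi(j)}\,g(\chi)$ and in particular automatically annihilates the terms with $\gcd(j,N)>1$, and then reconcile the powers of $i$, the factor $1/\sqrt N$ and the sign against the conventions fixed in \eqref{defn-Phi}; for imprimitive $\chi$ one reduces to the primitive case or evaluates the finite sum directly. Everything else—convergence of the integrals, the interchange of the finite sum and the integral in \eqref{h-z+N}, and the vanishing of the vertical sides in the contour deformation—is routine, since the Gaussian factor $e^{\pi i\tau x^{2}}$ dominates the at-most-exponential growth of $\Phi_N$.
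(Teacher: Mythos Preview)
Your approach is essentially the paper's own: for \eqref{h-z+N} the paper also cancels the denominator of $\Phi_N$ (it multiplies by $1-e^{2\pi\sqrt N x}$, computes $h_N(z)-h_N(z-\sqrt N)$, then substitutes $z\mapsto z+\sqrt N$ and uses $\chi(N-k)=\chi(-1)\chi(k)$), and for \eqref{h-z+Ntau} it does exactly the contour shift $\mathbb{R}\to\mathbb{R}+i\sqrt N$ using the $i\sqrt N$-periodicity of $\Phi_N$ and the residue computation recorded as Lemma~\ref{lem-F-residue}. The only discrepancies are the small sign slips you already anticipate: in the paper's normalization the denominator of $\Phi_N$ is $1-e^{2\pi\sqrt N x}$ (not $1-e^{-2\pi\sqrt N x}$) and the residue at $ij/\sqrt N$ is $-\frac{1}{2\pi\sqrt N}\sum_k\chi(k)e^{2\pi ikj/N}=-\frac{\overline{\chi(j)}g(\chi)}{2\pi\sqrt N}$, which is precisely the bookkeeping you flag as needing care.
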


Next,  we have the transformation of $h_N(z;\tau,\chi):\tau\rightarrow -\frac1\tau$.
\begin{theorem}\label{Thm-hN-1tau}We have
\begin{align*}
h_N\left(\frac{z}{\tau};-\frac{1}{\tau},\chi\right)
=\frac{i\sqrt N}{g(\overline\chi)}e^{-i\pi z^2/\tau}\sqrt{-i\tau}h_N(-z;\tau,\overline\chi).
\end{align*}
When $\chi$ is a primitive character modulo $N$, we have
\begin{align*}
h_N\left(\frac{z}{\tau};\frac{-1}{\tau},\chi_N\right)
=e^{-\pi iz^2/\tau}\sqrt{-i\tau}h_N(z;\tau,\chi_N).
\end{align*}
\end{theorem}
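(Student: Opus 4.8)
The plan is to mimic the derivation of properties~(3)--(5) of the classical Mordell integral: first recognize $h_N(\,\cdot\,;\tau,\chi)$ as the \emph{unique} entire solution of the pair \eqref{h-z+N}--\eqref{h-z+Ntau} of Theorem~\ref{Thm-hN-1-2}, and then verify that the right-hand side of Theorem~\ref{Thm-hN-1tau} solves the corresponding pair with $\tau$ replaced by $-1/\tau$. Substituting $z=w\tau$, the first claimed identity becomes
\[
h_N\!\left(w;-\tfrac1\tau,\chi\right)=G(w):=\frac{i\sqrt N}{g(\overline\chi)}\,e^{-i\pi w^2\tau}\,\sqrt{-i\tau}\;h_N(-w\tau;\tau,\overline\chi),
\]
and $G$ is manifestly entire in $w$, so, in view of the uniqueness (and of Theorems~\ref{Thm-hN-EO} and \ref{Thm-hN-1-2}, which ensure that $h_N(\,\cdot\,;-\tfrac1\tau,\chi)$ is an entire solution of the transformed pair), it suffices to check that $G$ itself satisfies \eqref{h-z+N} and \eqref{h-z+Ntau} with $\tau\mapsto-\tfrac1\tau$ and the same character $\chi$.

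The uniqueness is short. If $F$ is entire and satisfies \eqref{h-z+N} and \eqref{h-z+Ntau}, then $D:=F-h_N(\,\cdot\,;\tau,\chi)$ is entire and, the inhomogeneous terms cancelling, obeys $D(z)=D(z+\sqrt N)$ and $D(z+\sqrt N\tau)=e^{i\pi N\tau+2i\pi\sqrt N z}D(z)$. Writing $D(z)=\sum_{n\in\mathbb Z}a_n e^{2\pi i nz/\sqrt N}$ (a Laurent series in $e^{2\pi i z/\sqrt N}$, hence convergent for every $z$), the second relation gives the recursion $a_n=e^{i\pi\tau(N-2n)}a_{n-N}$, so along a residue class $m\bmod N$ one finds $|a_{m+kN}|=|a_m|\exp\!\big(\pi\IM(\tau)[Nk(k+1)+k(2m-N)]\big)$. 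Since $\IM(\tau)>0$, this grows like $e^{\pi\IM(\tau)Nk^2}$ as $k\to+\infty$, which overwhelms the decay of $e^{2\pi i(m+kN)z/\sqrt N}$ at any fixed $z$ with $\IM(z)>0$; the series then cannot converge unless every $a_m=0$, i.e.\ $D\equiv0$.

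It remains to verify the two equations for $G$. For \eqref{h-z+N} at $-\tfrac1\tau$, compute $G(w)-G(w+\sqrt N)$: replacing $w$ by $w+\sqrt N$ shifts the argument of $h_N(\,\cdot\,;\tau,\overline\chi)$ by $-\sqrt N\tau$, so \eqref{h-z+Ntau} for the character $\overline\chi$ rewrites $h_N(-w\tau-\sqrt N\tau;\tau,\overline\chi)$ as a scalar multiple of $h_N(-w\tau;\tau,\overline\chi)$ plus a finite exponential sum. The $h_N$-term recombines with the Gaussian prefactor via $e^{-i\pi(w+\sqrt N)^2\tau}e^{i\pi N\tau+2i\pi\sqrt N w\tau}=e^{-i\pi w^2\tau}$ and cancels against $G(w)$, while the exponential sum, after reindexing $n\mapsto N-n$, using $\overline\chi(-1)=\chi(-1)$ and $\sqrt{-i(-\tfrac1\tau)}\,\sqrt{-i\tau}=1$ together with the evaluation of $g(\chi)g(\overline\chi)$ supplied by \eqref{defn-gchi}, reassembles precisely into the right-hand side of \eqref{h-z+N} with $\tau$ replaced by $-\tfrac1\tau$. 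The verification of \eqref{h-z+Ntau} at $-\tfrac1\tau$ is entirely parallel, the only difference being that the shift of $w$ now moves the argument of $h_N(\,\cdot\,;\tau,\overline\chi)$ by $+\sqrt N$, so one uses \eqref{h-z+N} for $\overline\chi$ in place of \eqref{h-z+Ntau}, with the same bookkeeping. Uniqueness now gives $G=h_N(\,\cdot\,;-\tfrac1\tau,\chi)$, which is the first identity.

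The second identity is then the specialization $\chi=\chi_N$: the standard relations $g(\overline{\chi_N})=\chi_N(-1)\overline{g(\chi_N)}$ and $|g(\chi_N)|^2=N$, combined with the parity of $h_N(\,\cdot\,;\tau,\overline{\chi_N})$ from Theorem~\ref{Thm-hN-EO} (which converts $h_N(-z;\tau,\overline{\chi_N})$ into $\pm h_N(z;\tau,\chi_N)$ with sign governed by $\chi_N(-1)$), collapse the constant $i\sqrt N/g(\overline{\chi_N})$ to $1$. I expect the principal obstacle to be the uniqueness statement, where one must be certain that the recursion genuinely forces divergence --- the key being $\IM(\tau)>0$; in the remaining computation the one delicate point is keeping the branch of $\sqrt{\,\cdot\,}$ and the normalization of $g(\chi)$ in \eqref{defn-gchi} consistent, so that the prefactor $i\sqrt N/g(\overline\chi)$ comes out with the correct sign. (One can also bypass uniqueness and argue directly from the integral, writing $h_N(z/\tau;-\tfrac1\tau,\chi)=e^{-i\pi z^2/\tau}\int_{\mathbb R}\Phi_N(x;\chi)e^{-i\pi(x-iz)^2/\tau}dx$, replacing the Gaussian by its Fourier integral, interchanging the two integrations, and invoking the Fourier-transform identity for $\Phi_N$, namely $\int_{\mathbb R}\Phi_N(x;\chi)e^{-2\pi i\xi x}dx=c_N(\chi)\Phi_N(\xi;\overline\chi)$ for an explicit Gauss-sum constant $c_N(\chi)$, which is itself proved by shifting the contour past the poles of $\Phi_N$ and summing residues.)
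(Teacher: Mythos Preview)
Your primary argument differs from the paper's. The paper gives a direct, short proof via Fourier analysis: it writes $h_N(iz;\tau,\chi)=\mathcal{F}(f_\tau\Phi_N)(z)$ with $f_\tau(x)=e^{\pi i\tau x^2}$, applies the convolution identity $\mathcal{F}(f_\tau\Phi_N)=\mathcal{F}f_\tau*\mathcal{F}\Phi_N$, and then plugs in the known transforms $\mathcal{F}f_\tau=(-i\tau)^{-1/2}f_{-1/\tau}$ and $\mathcal{F}\Phi_N(\cdot;\chi)=\tfrac{g(\chi)}{i\sqrt N}\Phi_N(\cdot;\overline\chi)$ from Section~\ref{sec-pre}. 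Expanding the convolution integral and recognizing it as $h_N$ at $-1/\tau$ yields the identity in one stroke; analytic continuation in $z$ extends it from real to complex values. Your route instead establishes a uniqueness principle for entire solutions of the pair \eqref{h-z+N}--\eqref{h-z+Ntau} and then verifies that the candidate $G$ satisfies the pair at $-1/\tau$, mirroring Zwegers' treatment of the classical $h(z;\tau)$. This is a legitimate and conceptually clean alternative --- it explains \emph{why} the modular transformation must hold, and it would generalize to other situations where an explicit Fourier identity for the kernel is unavailable --- but it is longer and requires more bookkeeping (tracking signs through the reindexing, the parity of $\chi$, and the Gauss-sum relation $g(\chi)g(\overline\chi)=\chi(-1)N$). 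The parenthetical alternative you sketch at the end, rewriting the Gaussian via its Fourier integral and swapping integrations, is essentially the paper's convolution argument.
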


The paper is organized as follows. In Section \ref{sec-pre}, we first  recall some basic properties characters modulo $N$ and the $\chi$-twisted geometric series. We also discuss the  fourier transform of $\Phi_N(x;\chi)$.  In Section \ref{Thm-proof} we  prove  Theorem \ref{Thm-hN-EO}, Theorem \ref{Thm-hN-1-2} and Theorem \ref{Thm-hN-1tau}.
\section{Preliminaries}\label{sec-pre}
\subsection{Properties of characters modulo $N$ }
Let $N$ be a positive integer. We say $\chi$ be a character modulo $N$, if for all integers $m$ and $n$, we have
\begin{itemize}
\item $\chi(1)=1$,
\item $\chi(n+N)=\chi(n)$,
\item $\chi(mn)=\chi(m)\chi(n)$,
\item $\chi(n)=0$ if and only if $\gcd(n,N)>1$.
\end{itemize}

Let $N^\prime$ be a positive integer which is divisible by $N$. For any character $\chi$ modulo $N$, we can form a character $\chi^\prime$ modulo $N^\prime$ as follows:
\begin{align*}
\chi^\prime(n)=\begin{cases} \chi(n),&\text{if $\gcd(n,N^{\prime})=1$};\\
0, &\text{if $\gcd(n,N^{\prime})>1$}.
\end{cases}
\end{align*}
We say that $\chi^\prime$ is induced by the character $\chi$. Let $\chi$ be a character modulo $N$. If there is a proper division $d$ of $N$ and a character $\chi_1$ modulo $d$ which induces $\chi$, then the character $\chi$ is said to be non-primitive, otherwise it is called primitive. A character $\chi_0$ is called the unit character modulo $N$ if $\chi_0(n)=1$ for $\gcd(n,N)=1$. Let $\chi_d(n)=\left(\frac {d}{n}\right)$ denote the Kronecker's extension of Jacobi symbol \cite[p. 35]{Borevich}. It is well known that $\chi_d$ is primitive modulo $|d|$ \cite[Theorem 5]{Cohn}. For $d=1$, we define $\chi_1(n)=1$ for all $n$.

The Gaussian sum $g_n(\chi)$ associated with the character $\chi$ is defined as
\begin{align*}
g_n(\chi)=\sum_{k=1}^{N-1}\chi(k)\exp(2ikn\pi/N).
\end{align*}
We denote
\begin{align}\label{defn-gchi}
g(\chi)=g_1(\chi).
\end{align}
We need a lemma \cite[p.334]{Bump-97}.
\begin{lemma} Let $\chi$ be a primitive character modulo $N$. Then for any integer $n$, we have
\begin{align*}
g_n(\chi)=\overline{\chi(n)}g(\chi).
\end{align*}
\end{lemma}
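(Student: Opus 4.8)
The plan is to prove the lemma of Bump. The statement is: for a primitive character $\chi$ modulo $N$ and any integer $n$, we have $g_n(\chi)=\overline{\chi(n)}g(\chi)$. Let me sketch how I would prove it.

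First the easy case: suppose $\gcd(n,N)=1$. Then $\chi(n)\neq 0$ and $\overline{\chi(n)}=\chi(n)^{-1}=\chi(n^{-1})$ where $n^{-1}$ is an inverse of $n$ modulo $N$. Since $k\mapsto kn$ permutes the residues mod $N$, substituting $k\to kn^{-1}$ gives $g_n(\chi)=\sum_k \chi(k)e^{2\pi i kn/N} = \sum_j \chi(jn^{-1})e^{2\pi i j/N} = \chi(n^{-1})\sum_j\chi(j)e^{2\pi i j/N}=\overline{\chi(n)}g(\chi)$. This part uses only that $\chi$ is a character, not primitivity.

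The harder case: suppose $d:=\gcd(n,N)>1$. Then $\overline{\chi(n)}=0$, so we must show $g_n(\chi)=0$. This is exactly where primitivity is essential. The plan is: write $n=dn_1$, $N=dN_1$ with $N_1<N$ a proper divisor. Group the sum $g_n(\chi)=\sum_{k \bmod N}\chi(k)e^{2\pi i k n/N}=\sum_{k\bmod N}\chi(k)e^{2\pi i k n_1/N_1}$ according to residue classes of $k$ modulo $N_1$: write $k=a+bN_1$ with $a$ ranging over residues mod $N_1$ and $b$ over $0,\dots,d-1$. Since $e^{2\pi i (a+bN_1)n_1/N_1}=e^{2\pi i a n_1/N_1}$ does not depend on $b$, we get $g_n(\chi)=\sum_{a\bmod N_1}e^{2\pi i a n_1/N_1}\bigl(\sum_{b=0}^{d-1}\chi(a+bN_1)\bigr)$. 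It therefore suffices to show that for every $a$ the inner sum $S(a):=\sum_{b=0}^{d-1}\chi(a+bN_1)$ vanishes. Here is where I invoke primitivity: since $\chi$ is primitive modulo $N$ and $N_1$ is a proper divisor, $\chi$ is not induced by a character modulo $N_1$, which (by a standard characterization of primitivity) means there exists an integer $c$ with $c\equiv 1\pmod{N_1}$, $\gcd(c,N)=1$, and $\chi(c)\neq 1$. Multiplying the index by such a $c$: the map $k\mapsto ck$ permutes residues mod $N$ and fixes residues mod $N_1$, so it permutes, for each fixed $a\bmod N_1$, the set $\{a+bN_1 : b=0,\dots,d-1\}$ (as residues mod $N$). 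Hence $S(a)=\sum_b\chi(c(a+bN_1))=\chi(c)S(a)$, forcing $(1-\chi(c))S(a)=0$, so $S(a)=0$ since $\chi(c)\neq 1$. This gives $g_n(\chi)=0=\overline{\chi(n)}g(\chi)$.

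The main obstacle is the precise justification of the primitivity characterization I use in the second case — namely that a character $\chi$ mod $N$ is primitive if and only if for every proper divisor $N_1\mid N$ there is a unit $c$ mod $N$ with $c\equiv 1\pmod{N_1}$ and $\chi(c)\neq 1$. One proves the contrapositive: if no such $c$ exists for some $N_1$, then $\chi$ is constant ($=1$) on the subgroup $\{c : c\equiv 1\pmod{N_1}\}$ of $(\mathbb{Z}/N)^\times$, and then $\chi$ factors through the quotient $(\mathbb{Z}/N)^\times/\{c\equiv 1\} \cong (\mathbb{Z}/N_1)^\times$, yielding a character mod $N_1$ that induces $\chi$, contradicting primitivity; the converse is immediate. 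I would either cite this standard fact (it is in the same source, Bump's book, or in any text on Dirichlet characters) or include this short argument. Everything else is elementary manipulation of finite sums and the multiplicativity of $\chi$.
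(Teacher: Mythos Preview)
Your proof is correct and is essentially the standard argument. Note, however, that the paper does not actually supply its own proof of this lemma: it simply quotes the result from Bump \cite[p.~334]{Bump-97} and follows it with the remark that the case $\gcd(n,N)=1$ holds for any character, while primitivity is needed precisely to force $g_n(\chi)=0$ when $\gcd(n,N)>1$. Your write-up matches this two-case structure exactly and fills in the details the paper omits; the characterization of primitivity you invoke (existence of $c\equiv 1\pmod{N_1}$ with $\chi(c)\neq 1$) is the usual one and your justification of it is fine.
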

We remark for $\gcd(N,n)=1$, the above equality holds without $\chi$ being primitive; the primitiveness of $\chi$ is needed in showing that the left hand side of the identity is zero when $\gcd(N,n)>1$.

\subsection{$\chi$-twisted geometric series}
Consider
\begin{align*}
\phi_N(t;\chi)=\frac{\sum_{n=1}^{N-1}\chi(n)t^n}{1-t^N}.
\end{align*}
If there is no danger of confusion, we will simply write $\phi_N(t)$ for $\phi_N(t;\chi)$. The series $\sum_{n}^{}\chi(n)a_n$
is said to be the $\chi$-twisted series of $\sum_{n}^{}a_n$. The function $\phi_N$ is generated by twisting the geometric series $\sum_{n=0}^{\infty}t^n$ with the character $\chi$:
\begin{align*}
\sum_{n=0}^{\infty}\chi(n)t^n=\sum_{k=0}^{\infty}\sum_{n=0}^{N-1}\chi(kN+n)t^{kN+n}
=\sum_{n=0}^{N-1}\chi(n)t^n\sum_{k=0}^{\infty}t^{kN}=\phi_{N}(t;\chi),
\end{align*}
where $|t|<1$.

It is easy to verity
\begin{lemma}\mylabel{lem-phiN}\cite[Lemma 0.2]{Shen-ecnu}  Suppose $\chi\neq\chi_0$. We have

(1)
If $\chi(-1)=-1$,
\begin{align*}
\phi_N(t)=\phi_N(t^{-1}).
\end{align*}

(2)
If $\chi(-1)=1$,
\begin{align*}
\phi_N(t)=-\phi_N(t^{-1}).
\end{align*}

(3)
If $N\neq 1,2$,
\begin{align*}
\phi_N(1)=-\frac{1}{N}\sum_{n=1}^{N-1}n\chi(n).
\end{align*}

Moreover, $\phi_N(1)=0$ \text{if $\chi(-1)=1$}.
\end{lemma}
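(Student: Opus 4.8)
The plan is to prove all three parts by elementary manipulation of the rational function $\phi_N(t)=\dfrac{\sum_{n=1}^{N-1}\chi(n)t^n}{1-t^N}$, using as the only tools the reindexing substitution $n\mapsto N-n$, the identity $\chi(N-n)=\chi(-n)=\chi(-1)\chi(n)$ coming from $N$-periodicity and complete multiplicativity, and the orthogonality relation $\sum_{n=1}^{N-1}\chi(n)=0$, which holds precisely because $\chi\neq\chi_0$.

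For parts (1) and (2), I would substitute $t^{-1}$ for $t$ in $\phi_N$ and clear denominators. Multiplying the numerator and denominator of $\phi_N(t^{-1})$ by $t^N$ turns $1-t^{-N}$ into $-(1-t^N)$ and the numerator into $\sum_{n=1}^{N-1}\chi(n)t^{N-n}$. The change of index $m=N-n$ then rewrites this numerator as $\chi(-1)\sum_{m=1}^{N-1}\chi(m)t^m$, so that one gets the single clean identity $\phi_N(t^{-1})=-\chi(-1)\,\phi_N(t)$. Specializing gives (1) when $\chi(-1)=-1$ (so $-\chi(-1)=1$) and (2) when $\chi(-1)=1$ (so $-\chi(-1)=-1$).

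For part (3), observe first that $\chi\neq\chi_0$ forces $N\geq 3$ (there is no non‑principal character modulo $1$ or $2$), so the stated restriction $N\neq 1,2$ is automatic. Since the numerator $\sum_{n=1}^{N-1}\chi(n)t^n$ vanishes at $t=1$ by orthogonality while $1-t^N$ has a simple zero there, the point $t=1$ is a removable singularity of $\phi_N$, so $\phi_N(1)$ is well defined and is computed by L'Hôpital's rule: differentiating numerator and denominator and evaluating at $t=1$ gives $\phi_N(1)=\dfrac{\sum_{n=1}^{N-1}n\chi(n)}{-N}=-\dfrac1N\sum_{n=1}^{N-1}n\chi(n)$. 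For the final assertion, when $\chi(-1)=1$ part (2) gives $\phi_N(t)=-\phi_N(t^{-1})$; letting $t\to 1$ (legitimate since both sides extend continuously there) yields $\phi_N(1)=-\phi_N(1)$, hence $\phi_N(1)=0$. Alternatively one sees this directly from the closed form: reindexing $n\mapsto N-n$ in $\sum_{n=1}^{N-1}n\chi(n)$ and using $\sum_{n=1}^{N-1}\chi(n)=0$ shows $\sum_{n=1}^{N-1}n\chi(n)=-\chi(-1)\sum_{n=1}^{N-1}n\chi(n)$, which forces the sum to vanish when $\chi(-1)=1$.

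I do not expect a substantive obstacle here; the statement is essentially bookkeeping. The only points that need a little care are the justification that $\phi_N$ is regular at $t=1$ (so that the symbol $\phi_N(1)$ is meaningful and L'Hôpital or the continuity argument in the "moreover" part are applicable) and tracking the two sign sources in the reflection argument — the $-1$ produced when clearing $t^{-N}$ and the factor $\chi(-1)$ produced by $n\mapsto N-n$.
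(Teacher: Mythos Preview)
Your argument is correct. The paper does not actually supply a proof of this lemma: it merely precedes the statement with ``It is easy to verity'' and cites \cite[Lemma~0.2]{Shen-ecnu}. Your computation --- clearing $t^N$, reindexing $n\mapsto N-n$ to produce the unified identity $\phi_N(t^{-1})=-\chi(-1)\phi_N(t)$, and then applying L'H\^opital at the removable singularity $t=1$ using $\sum_{n=1}^{N-1}\chi(n)=0$ --- is exactly the kind of elementary verification the paper alludes to, so there is nothing to compare beyond noting that you have filled in the omitted details.
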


\subsection{Fourier transform of $\Phi_N(x;\chi)$}
The discrete Fourier transform (DFT) of a sequence $\mathbf{x}=(x_0,x_1,\cdots,x_{N-1})\in\mathbb{C}^N$ of length $N$ is $X=(X_0,X_1,\cdots,X_{N-1})$, where
\begin{align*}
X_k=\sum_{n=0}^{N-1}x_ne^{-2\pi ikn/N}.
\end{align*}
In this note we consider $\mathbf{x}=(x_0,x_1,\cdots,x_{N-1})$ for which $x_0=\sum_{n=1}^{N-1}x_n=0$ and denote the space of such sequences by $\mathbb{X}^N$. The DFT maps $\mathbb{X}^N$ into itself: we have
\begin{align*}
X_0=\sum_{n=0}^{N-1}x_n=0
\end{align*}
and
\begin{align*}
\sum_{n=1}^{N-1}X_n=\sum_{n=0}^{N-1}X_n=\sum_{n=0}^{N-1}\sum_{k=0}^{N-1}x_ke^{-2\pi ink/N}=\sum_{k=0}^{N-1}x_k\sum_{n=0}^{N-1}e^{-2\pi ikn/N}=Nx_0=0.
\end{align*}
For convenience we normalize the DFT by $\sqrt N$ and consider $\mathcal{F}_d:\mathbb{X}^N\rightarrow\mathbb{X}^N$ given by $\mathcal{F}_d(\mathbf{x})=\mathbf{X}/\sqrt N$. Further, we consider the map $\mathcal{S}_d:\mathbb{X}^N\rightarrow\mathbb{X}^N$ given by $x_k\rightarrow x_{N-k}$. These maps satisfy $\mathcal{F}^2_d=\mathcal{S}_d$. Hence $\mathcal{F}_d$ is a bijection with inverse $\mathcal{F}_d^{-1}=\mathcal{F}_d\circ\mathcal{S}_d$.

Let $\mathcal{S}$ denote the space of Schwartz functions. For $f\in\mathcal{S}$ the Fourier transform $\mathcal{F}(f)\in\mathcal{S}$ is given by
\begin{align*}
\mathcal{F}(f)(x)=\int_{\mathbb{R}}f(t)e^{-2\pi ixt}dt.
\end{align*}
The map $\mathcal{F}:\mathcal{S}\rightarrow\mathcal{S}$ satisfies $\mathcal{F}^2={S}$, where ${S}:\mathcal{S}\rightarrow\mathcal{S}$, $S(f)(x)=f(-x)$.

The convolution of $f_1$ and $f_2$ is by definition
\begin{align*}
(f_1*f_2)(x)=\int_{-\infty}^{\infty}f_1(x-t)f_2(t)dt.
\end{align*}
and $\mathcal{F} (f*g)=\mathcal{F} f\cdot\mathcal{F} g$.
\begin{definition}
We define $\Phi:\mathbb{X}^N\rightarrow\mathcal{S}$ by
\begin{align*}
\Phi(\mathbf{x})(t)=\frac{\sum_{k=1}^{N-1}x_ke^{2\pi kt/\sqrt N}}{1-e^{2\pi t\sqrt N}}
\end{align*}
\end{definition}

\begin{remark}
(a) It's easy to check that $\Phi(\mathbf{x})$ is indeed a Schwartz functin. The denominator has a simple zero for $t=0$, but there is no singularity since the numerator also has a zero. Here the condition $\sum_{n=1}^{N-1}x_n=0$ comes in. Further, $\Phi(\mathbf{x})$ has exponential decay for $t\rightarrow+\infty$ since $k<N$ and for $t\rightarrow-\infty$ since $x_0=0$.

(b)For all $\mathbf{x}\in\mathbb{X}^N$ we have
\begin{align*}
\mathcal{S}(\Phi(\mathbf{x}))=-\Phi(\mathcal{S}_d(\mathbf{x})).
\end{align*}
\end{remark}

\begin{theorem}\label{thm-fphi}
For all $\mathbf{x}\in\mathbb{X}^N$ we have
\begin{align*}
\mathcal{F}(\Phi(\mathbf{x}))=-i\Phi(\mathcal{F}_d(\mathcal{S}_d(\mathbf{x})))
=-i\Phi(\mathcal{F}^{-1}_d(\mathbf{x})).
\end{align*}
\end{theorem}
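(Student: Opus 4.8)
The plan is to compute the Fourier transform of $\Phi(\mathbf{x})$ directly by contour integration, which is the standard way to evaluate the "Mordell-type" integrals appearing here, and then recognize the answer as another function of the form $\Phi(\mathbf{y})$ for a suitable $\mathbf{y}\in\mathbb{X}^N$. Explicitly, write
\begin{align*}
\mathcal{F}(\Phi(\mathbf{x}))(x)=\int_{\mathbb{R}}\frac{\sum_{k=1}^{N-1}x_k e^{2\pi kt/\sqrt N}}{1-e^{2\pi t\sqrt N}}\,e^{-2\pi i x t}\,dt,
\end{align*}
where, because $\Phi(\mathbf{x})$ is Schwartz, the integrand has no pole on $\mathbb{R}$ (the zero of the denominator at $t=0$ is cancelled by the numerator, using $\sum_k x_k=0$). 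First I would shift the contour from $\mathbb{R}$ to $\mathbb{R}+\tfrac{i}{\sqrt N}$, i.e. substitute $t\mapsto t+\tfrac{i}{\sqrt N}$. Under $t\mapsto t+\tfrac{i}{\sqrt N}$ the denominator $1-e^{2\pi t\sqrt N}$ is invariant (since $e^{2\pi\sqrt N\cdot i/\sqrt N}=e^{2\pi i}=1$) while each term $e^{2\pi kt/\sqrt N}$ in the numerator picks up a factor $e^{2\pi i k/N}$ and the exponential $e^{-2\pi i x t}$ picks up a factor $e^{2\pi x/\sqrt N}$. The pole crossed during the shift is the simple zero of $1-e^{2\pi t\sqrt N}$ at $t=0$, whose residue feeds in the term $\Phi(\mathbf{x})$ itself times an elementary factor.

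Carrying this out, the shift produces a functional equation of the shape
\begin{align*}
\mathcal{F}(\Phi(\mathbf{x}))(x)=e^{2\pi x/\sqrt N}\,\mathcal{F}(\Phi(\mathbf{x}'))(x)+(\text{residue term}),
\end{align*}
where $\mathbf{x}'$ has components $x'_k=e^{2\pi i k/N}x_k$ and the residue term, coming from the simple pole of $\frac{1}{1-e^{2\pi t\sqrt N}}$ at $t=0$ with residue $-\tfrac{1}{2\pi\sqrt N}$, equals $\tfrac{1}{i\sqrt N}\bigl(\sum_{k=1}^{N-1}x_k\bigr)$ up to the trivial factor — but that sum is $0$, so in fact I must be more careful: the honest statement is that iterating the shift $N$ times returns to $\mathbf{x}$ (since $x'^{(N)}_k=e^{2\pi i kN/N}x_k=x_k$), and the accumulated residues telescope into a finite geometric-type sum in the variable $e^{-2\pi i x t}$ evaluated at the lattice points, which is precisely what $\Phi$ of the DFT vector $\mathcal{F}_d(\mathcal{S}_d(\mathbf{x}))$ packages. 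Concretely, after the $N$-fold shift one gets
\begin{align*}
\bigl(1-e^{2\pi x}\bigr)\mathcal{F}(\Phi(\mathbf{x}))(x)=\sum_{j=1}^{N-1}(\text{residue at the $j$-th crossed pole}),
\end{align*}
and each residue is a constant times $\bigl(\sum_{k=1}^{N-1}x_k e^{2\pi i jk/N}\bigr)e^{2\pi j x/\sqrt N}$; summing over $j$ and dividing by $1-e^{2\pi x}$ reconstitutes exactly $-i\,\Phi(\mathbf{y})(x)$ with $y_j=\tfrac{1}{\sqrt N}\sum_{k}x_k e^{-2\pi i j(N-k)/N}=(\mathcal{F}_d\circ\mathcal{S}_d)(\mathbf{x})_j$. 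One must also check separately that $\mathbf{y}\in\mathbb{X}^N$, i.e. $y_0=\sum_j y_j=0$, but this was already verified in the subsection on the DFT (the DFT maps $\mathbb{X}^N$ to itself), so $\Phi(\mathbf{y})$ makes sense.

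The second equality $\mathcal{F}_d(\mathcal{S}_d(\mathbf{x}))=\mathcal{F}_d^{-1}(\mathbf{x})$ is immediate from the identity $\mathcal{F}_d^{-1}=\mathcal{F}_d\circ\mathcal{S}_d$ established earlier in the text, so no further work is needed there. The main obstacle I anticipate is purely bookkeeping: getting the residue constant exactly right (the factor $-i$, and the normalization by $\sqrt N$), and correctly matching the index shift $k\mapsto N-k$ coming from $\mathcal{S}_d$ against the sign in the exponent of the DFT; a clean way to avoid sign errors is to verify the claimed identity first on the "basis" vectors $\mathbf{x}=\mathbf{e}_a-\mathbf{e}_{N-a}$ (for which $\Phi(\mathbf{x})$ is, up to scalar, $1/(2\cosh)$-type and the Fourier transform is classical), and then invoke linearity since such vectors span $\mathbb{X}^N$ when $\mathbf{x}$ is taken over appropriate combinations — although one should be slightly careful that these span the whole of $\mathbb{X}^N$, which they do since $\mathbb{X}^N$ is cut out by the single linear condition $\sum_{k\ge1}x_k=0$ together with $x_0=0$. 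Alternatively, and perhaps more robustly, one checks the identity is $\mathcal{F}$-equivariant in the sense that applying $\mathcal{F}$ once more to both sides and using $\mathcal{F}^2=S$, $\mathcal{F}_d^2=\mathcal{S}_d$, $\mathcal{S}(\Phi(\mathbf{x}))=-\Phi(\mathcal{S}_d(\mathbf{x}))$ gives a consistency check that pins down the constant $-i$ (note $(-i)^2=-1$ matches the sign in $\mathcal{S}(\Phi(\mathbf{x}))=-\Phi(\mathcal{S}_d\mathbf{x})$).
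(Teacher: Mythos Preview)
Your approach is essentially the paper's: shift the contour vertically, pick up the residues at $t=in/\sqrt N$ for $n=1,\ldots,N-1$, and recognize the resulting sum as $-i\Phi(\mathcal{F}_d^{-1}(\mathbf{x}))$. The paper does this in a single clean step, shifting by $i\sqrt N$ all at once and using the exact periodicity $\Phi(\mathbf{x})(t+i\sqrt N)=\Phi(\mathbf{x})(t)$, which avoids the bookkeeping of your intermediate shifts by $i/\sqrt N$ (where the contour would in fact land on a pole, and the twisted vector $\mathbf{x}'$ with $x'_k=e^{2\pi ik/N}x_k$ generally falls outside $\mathbb{X}^N$); note also that after the full shift the prefactor should be $1-e^{2\pi x\sqrt N}$, not $1-e^{2\pi x}$.
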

\begin{proof}
By definition we have
\begin{align*}
\mathcal{F}(\Phi(\mathbf{x}))(x)
=\int_\mathbb{R}\Phi(\mathbf{x})(t)e^{-2\pi ixt}dt.
\end{align*}
Now consider the integral
\begin{align*}
\int_{i\sqrt N+\mathbb{R}}\Phi(\mathbf{x})(t)e^{-2\pi ixt}dt.
\end{align*}
Shifting the integration variable $t$ by $i\sqrt N$ we see that it equals
\begin{align*}
e^{2\pi x\sqrt N}\int_\mathbb{R}\Phi(\mathbf{x})(t+i\sqrt N)e^{-2\pi ixt}dt.
\end{align*}
Since both $e^{2\pi kt/\sqrt N}$ and $e^{2\pi t\sqrt N}$ are $i\sqrt N$ periodic in $t$, we have $\Phi(\mathbf{x})(t+i\sqrt N)=\Phi(\mathbf{x})(t)$ and so we obtain
\begin{align*}
\int_{i\sqrt N+\mathbb{R}}\Phi(\mathbf{x})(t)e^{-2\pi ixt}dt
=e^{2\pi x\sqrt N}\int_{\mathbb{R}}\Phi(\mathbf{x})(t)e^{-2\pi ixt}dt.
\end{align*}
With the residue theorem we then get
\begin{align*}
(1-e^{2\pi x\sqrt N})\mathcal{F}(\Phi(\mathbf{x}))(x)
=&\left(\int_{\mathbb{R}}-\int_{i\sqrt N+\mathbb{R}}\right)\Phi(\mathbf{x})(t)e^{-2\pi ixt}\\
=&2\pi i\sum_{n=1}^{N-1}\mathop{Res}\limits_{t=\frac{in}{\sqrt N}}\Phi(\mathbf{x})(t)e^{-2\pi ixt}.
\end{align*}
These residues we can easily compute:
\begin{align*}
\mathop{Res}\limits_{t=\frac{in}{\sqrt N}}\Phi(\mathbf{x})(t)e^{-2\pi ixt}
=&\mathop{Res}\limits_{t=\frac{in}{\sqrt N}}\frac{\sum_{k=1}^{N-1}x_ke^{2\pi kt/\sqrt N}}{1-e^{2\pi t\sqrt N}}e^{-2\pi ixt}\\
=&\sum_{k=1}^{N-1}x_ke^{2\pi nx/\sqrt N}\mathop{Res}\limits_{t=\frac{in}{\sqrt N}}\frac{1}{1-e^{2\pi t\sqrt N}}\\
=&-\frac{1}{2\pi\sqrt N}e^{2\pi nx/\sqrt N}\sum_{k=1}^{N-1}x_ke^{2\pi i\frac{kn}{N}}\\
=&-\frac{1}{2\pi\sqrt N}e^{2\pi nx/\sqrt N}\sum_{k=1}^{N-1}x_{N-k}e^{-2\pi i\frac{kn}{N}}.
\end{align*}
Hence we have
\begin{align*}
\mathcal{F}(\Phi(\mathbf{x})(x))=\frac{-i}{\sqrt N}\frac{\sum_{n=1}^{N-1}\sum_{k=1}^{N-1}x_{N-k}e^{-2\pi i\frac{kn}{N}}e^{2\pi nx/\sqrt N}}{1-e^{2\pi x\sqrt N}},
\end{align*}
which gives the desired result.
\end{proof}

We define
\begin{align}\label{defn-Phi}
\Phi_N(x;\chi)=\phi_N(e^{2\pi x/\sqrt N};\chi).
\end{align}
Again, if there is no confusion, we abbreviate $\Phi_N(x;\chi)$ by $\Phi_N(x)$. We have
\begin{align*}
&\Phi_3(x)=\frac{1}{1+2\cosh \frac{2\pi x}{\sqrt 3}},\\
&\Phi_4(x)=\frac{1}{2\cosh \pi x}.
\end{align*}
where $\cosh x=\frac{e^x+e^{-x}}{2}$.
From \cite[Theorem 3.1]{Shen-ecnu}, we have
\begin{theorem}Let $N\geq3$ be the conductor of $\chi$. Then
\begin{align}\label{eq-phi-trans}
\mathcal{F}({\Phi}_N(\cdot;\chi))=\frac{g(\chi)}{i\sqrt N}\Phi_N(\overline \chi)
\end{align}
\end{theorem}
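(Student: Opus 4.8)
The plan is to realize $\Phi_N(\cdot;\chi)$ as the image under the linear map $\Phi$ of an explicit sequence attached to $\chi$, and then apply Theorem~\ref{thm-fphi} directly. First I would set $\mathbf{x}=(x_0,x_1,\dots,x_{N-1})$ with $x_k=\chi(k)$ for $0\le k\le N-1$; note $x_0=\chi(0)=0$ since $\gcd(0,N)=N>1$. Because $N\ge 3$ is the conductor of $\chi$, the character $\chi$ is primitive modulo $N$, hence in particular $\chi\ne\chi_0$, so orthogonality of characters gives $\sum_{k=1}^{N-1}\chi(k)=0$. Thus $\mathbf{x}\in\mathbb{X}^N$, and comparing the definition of $\Phi$ with \eqref{defn-Phi} shows $\Phi(\mathbf{x})=\Phi_N(\cdot;\chi)$.

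Next I would apply Theorem~\ref{thm-fphi}, which gives $\mathcal{F}(\Phi(\mathbf{x}))=-i\,\Phi(\mathcal{F}_d(\mathcal{S}_d(\mathbf{x})))$, reducing the problem to identifying the sequence $\mathcal{F}_d(\mathcal{S}_d(\mathbf{x}))$. Writing $\mathbf{y}=\mathcal{S}_d(\mathbf{x})$ we have $y_k=x_{N-k}=\chi(N-k)=\chi(-1)\chi(k)$, so the $k$-th component of $\mathcal{F}_d(\mathbf{y})=\mathbf{Y}/\sqrt N$ equals $\tfrac{1}{\sqrt N}\sum_{n=0}^{N-1}y_ne^{-2\pi ikn/N}=\tfrac{\chi(-1)}{\sqrt N}\sum_{n=1}^{N-1}\chi(n)e^{-2\pi ikn/N}=\tfrac{\chi(-1)}{\sqrt N}g_{-k}(\chi)$, where $g_{-k}$ is the Gaussian sum. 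Here I would invoke the lemma of \cite{Bump-97}: since $\chi$ is primitive, $g_{-k}(\chi)=\overline{\chi(-k)}\,g(\chi)=\overline{\chi(-1)}\,\overline{\chi(k)}\,g(\chi)$ for \emph{every} $k$ (this is exactly where primitivity is essential, as for $\gcd(k,N)>1$ the identity holds only because both sides vanish). Consequently the $k$-th component of $\mathcal{F}_d(\mathcal{S}_d(\mathbf{x}))$ is $\tfrac{\chi(-1)\overline{\chi(-1)}}{\sqrt N}\,\overline{\chi(k)}\,g(\chi)=\tfrac{g(\chi)}{\sqrt N}\,\overline{\chi(k)}$, using $\chi(-1)\overline{\chi(-1)}=\lvert\chi(-1)\rvert^2=1$.

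Finally, by linearity of $\Phi$ this says $\Phi(\mathcal{F}_d(\mathcal{S}_d(\mathbf{x})))=\tfrac{g(\chi)}{\sqrt N}\,\Phi(\mathbf{z})$, where $z_k=\overline{\chi(k)}$; since $\overline{\chi}$ is again a primitive character modulo $N$ we have $\mathbf{z}\in\mathbb{X}^N$ and $\Phi(\mathbf{z})=\Phi_N(\cdot;\overline{\chi})$. Combining this with Theorem~\ref{thm-fphi} and $-i=1/i$ yields $\mathcal{F}(\Phi_N(\cdot;\chi))=-i\,\tfrac{g(\chi)}{\sqrt N}\,\Phi_N(\cdot;\overline{\chi})=\tfrac{g(\chi)}{i\sqrt N}\,\Phi_N(\cdot;\overline{\chi})$, which is \eqref{eq-phi-trans}. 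The only real subtlety in this argument is the bookkeeping of the $\chi(-1)$ factors together with the $\sqrt N$ normalization, and the correct appeal to primitivity in the Gaussian-sum lemma; all remaining manipulations are routine.
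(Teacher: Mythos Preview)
Your proof is correct and follows essentially the same approach as the paper: both identify $\Phi_N(\cdot;\chi)$ with $\Phi(\mathbf{x})$ for $x_k=\chi(k)$, invoke Theorem~\ref{thm-fphi}, and then compute $\mathcal{F}_d(\mathcal{S}_d(\mathbf{x}))=\tfrac{g(\chi)}{\sqrt N}\,\overline{\chi}$ via the primitivity lemma on Gauss sums. The only cosmetic difference is that the paper handles $\mathcal{S}_d$ by the change of index $n\mapsto N-n$ in the sum (yielding $g_k(\chi)$ directly), whereas you factor out $\chi(-1)$ and work with $g_{-k}(\chi)$; the two computations are equivalent.
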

\begin{proof}
Let $\chi$ be a non-principal character modulo $N$ and consider $\mathbf{x}=\chi(k)$. Since $\sum_{n=1}^{N-1}\chi(n)=0$ we have $\mathbf{x}\in\mathbb{X}^{N}$. For the discrete Fourier transform of $\mathcal{S}_d(\mathbf{x})$ we have
\begin{align*}
X_k=\sum_{n=0}^{N-1}\chi(N-n)e^{-2\pi i\frac{kn}{N}}=\sum_{n=0}^{N-1}\chi(n)e^{2\pi i\frac{kn}{N}}
\end{align*}
If $\chi$ is a primitive character this sum equals
\begin{align*}
\overline\chi(k)\sum_{n=1}^{N-1}\chi(n)e^{2\pi i\frac{kn}{N}}
=\overline\chi(k)g(\chi).
\end{align*}
Hence
\begin{align*}
\mathcal{F}_d(\mathcal{S}_d(\chi))=\frac{g(\chi)}{\sqrt N}\overline\chi
\end{align*}
and using Theorem \ref{thm-fphi}, \eqref{eq-phi-trans} holds.
\end{proof}

Let $-D$ be the discriminant of the imaginary quadratic field $\mathbb(\sqrt{-D})$ and let $\chi_D(n)=\left(\frac{-D}{n}\right)$. Then $\chi_D$ is a non-unit primitive character modulo $D$ and $\chi(-1)=-1$; moreover, since $g(\chi_D)=i\sqrt D$, we obtain a family of functions which are invariant under Fourier transform.
\begin{cor} We have
\begin{align*}
\mathcal{F}({\Phi}_D(\chi_D)){(t)}=\Phi_D(t;\chi_D).
\end{align*}
\end{cor}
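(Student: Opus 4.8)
The plan is to read off the corollary directly from the transformation formula \eqref{eq-phi-trans}, namely $\mathcal{F}(\Phi_N(\cdot;\chi))=\frac{g(\chi)}{i\sqrt N}\Phi_N(\overline\chi)$, valid whenever $N\ge 3$ is the conductor of $\chi$. First I would check that $\chi_D$ satisfies these hypotheses. Since $-D$ is the discriminant of the imaginary quadratic field $\mathbb{Q}(\sqrt{-D})$, the integer $D$ is positive and at least $3$, and the Kronecker symbol $\chi_D(n)=\left(\frac{-D}{n}\right)$ is a primitive character modulo $D$ by \cite[Theorem 5]{Cohn}; hence $D$ is its conductor and \eqref{eq-phi-trans} applies with $N=D$, $\chi=\chi_D$, giving $\mathcal{F}(\Phi_D(\cdot;\chi_D))=\frac{g(\chi_D)}{i\sqrt D}\,\Phi_D(\overline{\chi_D})$.

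Next I would use two elementary facts about $\chi_D$ to collapse the right-hand side. Because $\chi_D$ is real-valued (its values lie in $\{0,\pm 1\}$) we have $\overline{\chi_D}=\chi_D$, so $\Phi_D(\overline{\chi_D})=\Phi_D(\chi_D)$. Because $\chi_D$ is the quadratic character attached to an imaginary quadratic field it is odd, $\chi_D(-1)=-1$, and Gauss's determination of the sign of a quadratic Gauss sum then gives $g(\chi_D)=i\sqrt D$ — the evaluation already recorded in the sentence preceding the statement. Substituting these two facts into the formula above turns the prefactor $\frac{g(\chi_D)}{i\sqrt D}$ into $1$ and produces $\mathcal{F}(\Phi_D(\chi_D))(t)=\Phi_D(t;\chi_D)$.

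The only input that is not pure bookkeeping is the sign $g(\chi_D)=+i\sqrt D$: the general identities $|g(\chi_D)|^2=D$ and $\overline{g(\chi_D)}=\chi_D(-1)g(\chi_D)$ for a primitive character already force $g(\chi_D)\in\{i\sqrt D,-i\sqrt D\}$, and pinning down the plus sign is Gauss's classical theorem. I do not anticipate any real difficulty; the corollary is essentially a specialization of \eqref{eq-phi-trans} to the self-dual odd character $\chi_D$.
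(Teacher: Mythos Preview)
Your proposal is correct and follows exactly the paper's own argument: the corollary is obtained by specializing \eqref{eq-phi-trans} to the real, odd, primitive character $\chi_D$, using $\overline{\chi_D}=\chi_D$ and the stated value $g(\chi_D)=i\sqrt D$. There is nothing to add.
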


\begin{lemma}\mylabel{lem-F-residue}
Let
\begin{align*}
F(x;\tau):=e^{i\pi\tau x^2-2\pi zx}\Phi_N(x;\tau),
\end{align*}
where $x\in\mathbb{C}$. The residue of $F$ at $x_n=\frac{in}{\sqrt N}, n=1,2,\cdots,N-1$, is
\begin{align*}
Res(F,x_n)=-\frac{g(\chi)\overline{\chi(n)}}{2\pi\sqrt N}e^{-i\pi n^2\tau/N-2\pi inz/\sqrt N}.
\end{align*}
\end{lemma}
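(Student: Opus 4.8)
The plan is to read the residue off the factorized form of $F$. Recalling \eqref{defn-Phi}, we have
\[
\Phi_N(x;\chi)=\phi_N\!\left(e^{2\pi x/\sqrt N};\chi\right)=\frac{\sum_{k=1}^{N-1}\chi(k)e^{2\pi kx/\sqrt N}}{1-e^{2\pi x\sqrt N}},
\]
so $F(x;\tau)=e^{i\pi\tau x^2-2\pi zx}\cdot\big(\sum_{k=1}^{N-1}\chi(k)e^{2\pi kx/\sqrt N}\big)\cdot\big(1-e^{2\pi x\sqrt N}\big)^{-1}$. The first two factors are entire in $x$, so every pole of $F$ is a pole of the last factor, located at the zeros of $1-e^{2\pi x\sqrt N}$, i.e.\ at $x=\frac{im}{\sqrt N}$ with $m\in\mathbb Z$; in particular $x_n=\frac{in}{\sqrt N}$ is such a point. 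Hence $\mathrm{Res}(F,x_n)$ equals the product of the values of $e^{i\pi\tau x^2-2\pi zx}$ and of $\sum_{k=1}^{N-1}\chi(k)e^{2\pi kx/\sqrt N}$ at $x=x_n$, times $\mathrm{Res}_{x=x_n}\big(1-e^{2\pi x\sqrt N}\big)^{-1}$.

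Next I would check that $x_n$ is a simple zero of $1-e^{2\pi x\sqrt N}$: the derivative $-2\pi\sqrt N\,e^{2\pi x\sqrt N}$ takes the nonzero value $-2\pi\sqrt N\,e^{2\pi in}=-2\pi\sqrt N$ at $x=x_n$, so
\[
\mathrm{Res}_{x=x_n}\frac{1}{1-e^{2\pi x\sqrt N}}=-\frac{1}{2\pi\sqrt N}.
\]
Then I substitute $x_n=\frac{in}{\sqrt N}$, using $x_n^2=-\frac{n^2}{N}$, which gives $e^{i\pi\tau x_n^2-2\pi zx_n}=e^{-i\pi n^2\tau/N-2\pi inz/\sqrt N}$ and $e^{2\pi kx_n/\sqrt N}=e^{2\pi ikn/N}$, so that $\sum_{k=1}^{N-1}\chi(k)e^{2\pi kx_n/\sqrt N}=\sum_{k=1}^{N-1}\chi(k)e^{2\pi ikn/N}=g_n(\chi)$. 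Since $\chi$ is the primitive character modulo $N$, the lemma recalled in Section~\ref{sec-pre} (Bump) gives $g_n(\chi)=\overline{\chi(n)}\,g(\chi)$ for every integer $n$, and multiplying the three pieces yields
\[
\mathrm{Res}(F,x_n)=-\frac{1}{2\pi\sqrt N}\,e^{-i\pi n^2\tau/N-2\pi inz/\sqrt N}\,\overline{\chi(n)}\,g(\chi)=-\frac{g(\chi)\overline{\chi(n)}}{2\pi\sqrt N}e^{-i\pi n^2\tau/N-2\pi inz/\sqrt N},
\]
which is exactly the claimed identity.

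The argument is entirely elementary and I do not foresee a genuine obstacle; the one point meriting care is the case $\gcd(n,N)>1$. There $1-e^{2\pi x\sqrt N}$ still vanishes at $x_n$, but primitiveness of $\chi$ forces the numerator $g_n(\chi)=\overline{\chi(n)}g(\chi)=0$ as well, so $x_n$ is actually a removable singularity and the residue is $0$ — consistent with the stated formula, whose right-hand side carries the factor $\overline{\chi(n)}=0$. It is also worth recording the standing hypothesis (that $\chi$ is a primitive character modulo $N$ with $N\ge 3$, so that $\Phi_N(x;\chi)$ rather than $\Phi_N(x;\tau)$ is meant), since this is precisely what makes the Gaussian-sum evaluation available; without primitiveness the formula should instead be written with $g_n(\chi)$ in place of $\overline{\chi(n)}g(\chi)$.
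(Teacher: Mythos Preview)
Your argument is correct and follows essentially the same route as the paper: factor $F$ as an entire function times $(1-e^{2\pi x\sqrt N})^{-1}$, compute the residue of the latter at $x_n$ via the derivative (equivalently, the paper's $\lim_{x\to x_n}(x-x_n)\Phi_N(x;\chi)$), evaluate the entire factors at $x_n$, and recognize the resulting character sum as $g_n(\chi)=\overline{\chi(n)}g(\chi)$. Your additional remarks on the removable singularity when $\gcd(n,N)>1$ and on the role of primitivity are welcome clarifications that the paper leaves implicit.
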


\begin{proof}Since
\begin{align*}
Res(F,x_n)=\lim_{x\rightarrow x_n}(x-x_n)F(x)
=e^{-i\pi\tau n^2/N-2\pi inz/\sqrt N}\lim_{x\rightarrow x_n}(x-x_n)\Phi_N(x;\chi)
\end{align*}
and
\begin{align*}
&(x-x_n)\Phi_N(x;\chi)
=\frac{x-\frac{in}{\sqrt N}}{1-e^{2\pi\sqrt N x}}\sum_{k=1}^{N-1}\chi(k)e^{2\pi kx/\sqrt N},\\
&\lim_{x\rightarrow x_n}(x-x_n)\Phi_N(x;\chi)
=-\frac{1}{2\pi\sqrt N}\sum_{k=1}^{N-1}\chi(k)e^{2\pi ink/N}
=-\frac{g(\chi)\overline{\chi(n)}}{2\pi\sqrt N},
\end{align*}
we have
\begin{align*}
Res(F,x_n)=-\frac{g(\chi)\overline{\chi(n)}}{2\pi\sqrt N}e^{-\pi i\tau n^2/N-2\pi inz/\sqrt N}.
\end{align*}
\end{proof}
\section{The proof of Theorem \ref{Thm-hN-EO}--\ref{Thm-hN-1tau}}\label{Thm-proof}
\begin{proof} [The proof of Theorem \ref{Thm-hN-EO}]
From Lemma \ref{lem-phiN},  when $\chi(-1)=1$, we know that $\phi_N(t)=-\phi_N(t^{-1})$. Then
\begin{align*}
h_N(-z;\tau, \chi)&=\int_{-\infty}^{\infty}\Phi_N(x;\chi)e^{\pi i\tau x^2+2\pi zx}dx\\
&=-\int_{-\infty}^{\infty}\Phi_N(-x;\chi)e^{\pi i\tau x^2-2\pi zx}dx\\
&=-h_N(z;\tau, \chi).
\end{align*}
The above last equation holds by $\Phi_N(-x;\chi)=\phi(e^{-2\pi x/{\sqrt N}})=\phi(e^{2\pi x/{\sqrt N}})=\Phi_N(x;\chi)$. We omit the proof of the case of $\chi(-1)=-1$.
\end{proof}

\begin{proof}[The proof of Theorem \ref{Thm-hN-1-2}]

(1)
Using
\begin{align*}
(1-e^{2\sqrt N\pi x})\Phi_N(x;\chi)
=\sum_{k=1}^{N-1}\chi(k)e^{2k\pi x/\sqrt N},
\end{align*}
 we have
\begin{align}
h_N(z;\tau,\chi)-h_N(z-\sqrt N;\tau,\chi)
=&\int_{-\infty}^{\infty}e^{i\pi\tau x^2-2\pi zx}\Phi_N(x;\chi)(1-e^{2\sqrt N\pi x})dx\nonumber\\
=&\int_{-\infty}^{\infty}e^{i\pi\tau x^2-2\pi zx}\sum_{k=1}^{N-1}\chi(k)e^{2k\pi x/\sqrt N}dx\nonumber\\
=&\frac{1}{\sqrt{-i\tau}}\sum_{k=1}^{N-1}\chi(k)e^{i\pi(z-\frac{k}{\sqrt N})^2/\tau}\label{hN-1}.
\end{align}
The last equation comes from the well-known identity
\begin{align*}
\int_{-\infty}^{\infty}e^{\pi i\tau x^2-2\pi zx}dx=\frac{e^{\pi iz^2/\tau}}{\sqrt{-i\tau}}.
\end{align*}
Replacing $z$ by $z+\sqrt N$  in \eqref {hN-1} and simplify, \eqref{h-z+N} holds.

(2) If we change $x$ into $x+i\sqrt N$ we find
\begin{align*}
&\int_{-\infty+i\sqrt N}^{\infty+i\sqrt N}e^{\pi i\tau x^2-2\pi zx}\Phi_N(x;\chi)dx\\
=&\int_{-\infty}^{\infty}e^{\pi i\tau(x+i\sqrt N)^2-2\pi z(x+i\sqrt N)}\Phi_N(x+i\sqrt N;\chi)dx\\
=&e^{-\pi iN\tau-2\pi i\sqrt N z}\int_{-\infty}^{\infty}e^{\pi i\tau x^2-2\pi x(z+\sqrt N\tau)}\Phi_N(x;\chi)dx\\
=&e^{-\pi iN\tau-2\pi i\sqrt N z}h_N(z+\sqrt N\tau;\tau,\chi),
\end{align*}
 by $\Phi(z+i\sqrt N;\chi)=\Phi(z;\chi)$. From Lemma \ref{lem-F-residue}, using Cauchy's theorem we find
\begin{align*}
&h_N(z;\tau,\chi)-e^{-i\pi N\tau-2i\pi\sqrt N z}h_N(z+\sqrt N\tau;\tau,\chi)\\
=&\left(\int_{-\infty}^{\infty}-\int_{-\infty+i\sqrt N}^{\infty+i\sqrt N}\right)e^{\pi i\tau x^2-2\pi zx}\Phi_N(x;\chi)dx\\
=&2\pi i\sum_{n=1}^{N-1}Res(F,x_n)\\
=&\frac{g(\chi)}{i\sqrt N}\sum_{n=1}^{N-1}\overline{\chi(n)}e^{-\pi in^2\tau/N-2\pi inz/\sqrt N}.
\end{align*}
\end{proof}

\begin{proof}[The proof of Theorem \ref{Thm-hN-1tau}]
Let $f_\tau(x)=e^{\pi i\tau x^2}$, $\tau\in\mathbb{H}$. The Fourier transform of $f_\tau$ is given by
\begin{align*}
\mathcal{F}{f}_\tau=\frac{1}{\sqrt{-i\tau}}f_{-\frac 1\tau}.
\end{align*}
Next, we have
\begin{align*}
h_N(iz;\tau,\chi)&=\int_{-\infty}^{\infty}e^{-2\pi i zx}\Phi_N(x;\chi)e^{\pi i\tau x^2}dx=\mathcal{F}(f_\tau\Phi_N)(z)\\
&=(\mathcal{F} f_\tau*\mathcal{F}\Phi_N)(z)=\frac{g(\chi)}{i\sqrt N}\frac{1}{\sqrt{-i\tau}}
\int_{-\infty}^{\infty}\Phi_N(x;\overline\chi)e^{-\pi i(z-x)^2/\tau}dx\\
&=\frac{g(\chi)}{i\sqrt N}\frac{e^{-\pi iz^2/\tau}}{\sqrt{-i\tau}}
\int_{-\infty}^{\infty}\Phi_N(x;\overline\chi)e^{\pi i(-1/\tau)x^2-2\pi(-iz/\tau)x}dx\\
&=\frac{g(\chi)}{i\sqrt N}\frac{e^{-\pi iz^2/\tau}}{\sqrt{-i\tau}}h_N\left(-\frac{iz}{\tau};-\frac{1}{\tau},\overline\chi\right).
\end{align*}
Thus, for $z\in\mathbb{R}$,
\begin{align*}
h_N(iz;\tau,\chi)=\frac{g(\chi)}{i\sqrt N}\frac{e^{i \pi (iz)^2/\tau}}{\sqrt{-i\tau}}h_N\left(-\frac{iz}{\tau};-\frac{1}{\tau},\overline\chi\right).
\end{align*}
Since $h_N$ is an entire function of $z$, the identity holds for all $z\in\mathbb{C}$. Replacing $iz$ by $z$,
we have the  desired identity.

\end{proof}

\subsection*{Acknowledgements}

The authors would like to thank Prof. Li-Chien Shen  for helpful discussion. The first author was supported in part by  the National Natural Science Foundation of China (\#12201387), Shanghai Sailing Program (\#21YF1413600) and  Shanghai Key Laboratory of Pure Mathematics and Mathematical Practice(\#22DZ2229014).

\end{document}